\documentclass[12pt,a4paper]{article}
\usepackage[utf8x]{inputenc}
\usepackage{amsmath}
\usepackage{amsfonts}
\usepackage{amssymb}
\usepackage{amsthm}
\usepackage{mathrsfs}
\usepackage{fancyhdr}
\usepackage{graphicx}
\usepackage[usenames,x11names]{xcolor}
\usepackage{arabtex}
\usepackage{framed}
\usepackage[top=2cm,bottom=2cm,margin=2cm]{geometry}
\usepackage{cancel}
\usepackage{array}   %%%% Pour les fonctionalités assez évoluées d'un tableau

\usepackage{hyperref}

\title{The integrality of the Genocchi numbers obtained through a new identity and other results}
\author{\sc Bakir FARHI \\
Laboratoire de Mathématiques appliquées \\
Faculté des Sciences Exactes \\
Université de Bejaia, 06000 Bejaia, Algeria \\[1mm]
\href{mailto:bakir.farhi@gmail.com}{bakir.farhi@gmail.com} \\[1mm]
\url{http://farhi.bakir.free.fr/}
}
\date{}

\pagestyle{fancy}

\def\R{{\mathbb R}}

\def\N{{\mathbb N}}
\def\Z{{\mathbb Z}}
\def\lcm{\mathrm{lcm}}

 %%% Pour le signe d'une expression

\def\EMdash{\leavevmode\hbox to 10.6mm{\vrule height .63ex depth -.59ex
    width 10mm\hfill}}

\theoremstyle{plain}
\numberwithin{equation}{section}
\newtheorem{thm}{Theorem}[section]

\newtheorem{prop}[thm]{Proposition}
\newtheorem{coll}[thm]{Corollary}

      %%% Théorème numéroté sans suivre sa section
    %%% Pour un théorème non numéroté

\newtheorem{propn}{Proposition} %%% Proposition numérotée sans suivre sa section

\theoremstyle{definition}

\theoremstyle{remark}

\parindent=1cm

\fancyhead{}
\rhead{\leftmark}
\lhead{\sc \textcolor{blue}{B. Farhi}}

\begin{document}
\maketitle

\vspace*{-11cm}

\begin{minipage}{0.6\textwidth}
\textit{This note replaces and improves the previous one, entitled ``On a curious integer sequence'' and referenced by: \href{http://arxiv.org/abs/2204.10136}{arXiv:2204.10136v1 [math.NT] 21 Apr 2022}.}
\end{minipage}

\vspace*{9cm}

\begin{abstract}
In this note, we investigate some properties of the integer sequence of general term $a_n := \sum_{k = 0}^{n - 1} k! (n - k - 1)!$ ($\forall n \geq 1$) to derive a new identity of the Genocchi numbers $G_n$ ($n \in \N$), which immediately shows 
that $G_n \in \Z$ for any $n \in \N$. In another direction, we obtain nontrivial lower bounds for the $2$-adic valuations of the rational numbers $\sum_{k = 1}^{n} \frac{2^k}{k}$.      
\end{abstract}

\noindent\textbf{MSC 2010:} Primary 11B65, 11B68, 11B73. \\
\textbf{Keywords:} Genocchi numbers, Stirling numbers, binomial coefficients, $p$-adic valuations.

\section{Introduction and Notation}\label{sec1}

Throughout this note, we let $\N^*$ denote the set of positive integers. For $x \in \R$, we let $\lfloor x\rfloor$ denote the integer part of $x$. For a given prime number $p$ and a given positive integer $n$, we let $\vartheta_p(n)$ and $s_p(n)$ respectively denote the usual $p$-adic valuation of $n$ and the sum of base-$p$ digits of $n$. A well-known formula of Legendre (see e.g., \cite[Theorem 2.6.4, page 77]{moll}) states that for any prime number $p$ and any positive integer $n$, we have
\begin{equation}\label{eq12}
\vartheta_p(n!) = \frac{n - s_p(n)}{p - 1} .
\end{equation}
Next, we let $s(n , k)$ and $S(n , k)$ (with $n , k \in \N$, $n \geq k$) respectively denote the Stirling numbers of the first and second kinds, which can be defined as the integer coefficients appearing in the polynomial identities:
\begin{equation*}
\begin{split}
X (X - 1) \cdots (X - n + 1) & = \sum_{k = 0}^{n} s(n , k) X^k , \\
X^n & = \sum_{k = 0}^{n} S(n , k) X (X - 1) \cdots (X - k + 1) 
\end{split} ~~~~~~~~~~ (\forall n \in \N) .
\end{equation*}
This immediately implies the orthogonality relations:
\begin{equation}\label{eq13}
\sum_{k \leq i \leq n} s(n , i) S(i , k) = \sum_{k \leq i \leq n} S(n , i) s(i , k) = \delta_{n k} ~~~~~~~~~~ (\forall n , k \in \N , n \geq k) ,
\end{equation}
where $\delta_{n k}$ is the Kronecker delta. Among the many formulas related to the Stirling numbers, we mention the following (see e.g., \cite[§1.14, page 51]{com}): 
\begin{equation}\label{eq5}
\frac{\log^k(1 + x)}{k!} = \sum_{n = k}^{+ \infty} s(n , k) \frac{x^n}{n!} ~~~~~~~~~~ (\forall k \in \N) ,
\end{equation}
which is needed later on. We let finally $B_n$ and $G_n$ ($n \in \N$) respectively denote the Bernoulli and the Genocchi numbers which can be defined by their respective exponential generating functions:
\begin{equation}\label{eq4}
\frac{x}{e^x - 1} = \sum_{n = 0}^{+ \infty} B_n \frac{x^n}{n!} ~~~~\text{and}~~~~ \frac{2 x}{e^x + 1} = \sum_{n = 0}^{+ \infty} G_n \frac{x^n}{n!} . 
\end{equation} 
The famous Genocchi theorem \cite{gen} states that the $G_n$'s are all integers. There are at least two beautiful proofs of the Genocchi theorem: the first one uses the formula $G_n = 2 (1 - 2^n) B_n$ (see e.g., \cite{com}) together with the Fermat little theorem and the von Staudt-Clausen theorem, while the second one uses the remarkable Seidel formula \cite{sei}:
$$
\sum_{k = 0}^{n} \binom{n}{k} G_{n + k} = 0 ~~~~~~~~~~ (\forall n \in \N) .
$$
In this note, we give a new proof of the integrality of the $G_n$'s by expressing them in terms of the Stirling numbers of the second kind. The starting point of this research is the study of the integer sequence ${(a_n)}_{n \in \N}$, defined by:
\begin{equation}\label{eqq1}
a_0 = 0 ~~\text{and}~~ a_n := \sum_{k = 0}^{n - 1} k! (n - k - 1)! ~~~~ (\forall n \in \N^*) .
\end{equation}
This sequence is closely related to the sum of the inverses of binomial coefficients which is studied by several authors (see \cite{jua,roc,sur1,sur2,tri,yan,zha}). It must be noted that both Stirling numbers, Genocchi numbers and the numbers $a_n$ ($n \in \N$) have combinatorial interpretations (see e.g., \cite{com,sta} for the Stirling numbers, \cite{dum,vie} for the Genocchi numbers and the sequence \href{https://oeis.org/search?q=A003149&language=english&go=Search}{A003149} of \cite{slo} for the $a_n$'s). 

In another direction, by leaning on Legendre's formula \eqref{eq12}, an identity due to Rockett \cite{roc}, and another identity due to the author \cite{far}, we obtain nontrivial lower bounds for the $2$-adic valuations of the rational numbers $\sum_{k = 1}^{n} \frac{2^k}{k}$ ($n \in \N^*$). 

\section{The results and the proofs}

Our main result is the following:

\begin{thm}\label{t1}
For all natural number $n$, we have
\begin{equation}\label{eqq2}
G_n = \sum_{1 \leq \ell \leq k \leq n} (-1)^{k - 1} (\ell - 1)! (k - \ell)! S(n , k) .
\end{equation}
In particular, $G_n$ is an integer for any $n \in \N$.
\end{thm}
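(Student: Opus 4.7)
The plan is to prove \eqref{eqq2} by matching two expansions of the exponential generating function of the Genocchi numbers. First I would introduce
\[
f(y) := \sum_{n \geq 1} (-1)^{n-1}\frac{a_n}{n!}\,y^n
\]
and compute $f$ in closed form. A direct manipulation gives $a_n/n! = \tfrac{1}{n}\sum_{j=0}^{n-1}\binom{n-1}{j}^{-1}$, and a classical identity for sums of reciprocals of binomial coefficients (of the kind studied by Rockett, cf.\ \cite{roc}) yields the compact formula
\[
\frac{a_n}{n!} = \frac{1}{2^n}\sum_{k=1}^{n}\frac{2^k}{k}.
\]
Plugging this into the series defining $f$ and performing a discrete Abel summation (equivalently a Cauchy product with the geometric series for $1/(y+2)$ and the logarithm series for $2\log(1+y)$) then gives
\[
f(y) = \frac{2\log(1+y)}{y+2}.
\]

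Next, I would substitute $y = e^x - 1$. The right-hand side collapses to $2x/(e^x+1)$, which by \eqref{eq4} is exactly $\sum_{n \geq 0} G_n \,x^n/n!$. For the left-hand side, applying the companion of \eqref{eq5},
\[
\frac{(e^x-1)^k}{k!} = \sum_{n \geq k} S(n,k)\frac{x^n}{n!},
\]
and interchanging the order of summation rewrites $f(e^x-1)$ as $\sum_{n \geq 1}\bigl(\sum_{k=1}^{n}(-1)^{k-1}a_k\, S(n,k)\bigr)\frac{x^n}{n!}$.

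Comparing coefficients of $x^n/n!$ on the two sides produces the identity $G_n = \sum_{k=1}^{n}(-1)^{k-1}a_k\, S(n,k)$. Expanding $a_k = \sum_{\ell=1}^{k}(\ell-1)!(k-\ell)!$ yields \eqref{eqq2}, and the integrality of $G_n$ follows at once because $(\ell-1)!$, $(k-\ell)!$ and $S(n,k)$ are integers.

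The main obstacle is the closed-form evaluation of $f(y)$; concretely, the key nontrivial ingredient is the Rockett-type identity $\sum_{j=0}^{m}\binom{m}{j}^{-1} = \tfrac{m+1}{2^{m+1}}\sum_{k=1}^{m+1}\tfrac{2^k}{k}$, which converts the double factorial sum defining $a_n$ into the much more tractable expression $2^{-n}\sum_{k=1}^n 2^k/k$. Once this step is in hand, the remainder of the argument reduces to routine formal manipulation of exponential generating functions together with the Stirling-number generating function.
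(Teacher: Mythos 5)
Your argument is correct, and it reaches \eqref{eqq2} by a genuinely more direct route than the paper, while resting on the same key computation. Both proofs hinge on the Rockett-type closed form $a_n = \frac{n!}{2^n}\sum_{k=1}^{n}\frac{2^k}{k}$ (the paper's Proposition \ref{p1}) and hence on the closed-form generating function of the $a_n$'s; your $f(y) = \frac{2\log(1+y)}{y+2}$ is just the sign-flipped version of Corollary \ref{coll1}. The divergence is in how the Genocchi numbers are extracted. The paper substitutes $h(x)=\log(1-x)$ into the Genocchi generating function \eqref{eq4}, expands via the first-kind Stirling identity \eqref{eq5} to obtain the intermediate formula $a_n = (-1)^{n-1}\sum_{k}G_k\,s(n,k)$ (Corollary \ref{coll5}), and then inverts using the orthogonality of the two kinds of Stirling numbers (Proposition \ref{p2}). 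You instead substitute $y=e^x-1$ directly into $f$ and expand with the second-kind companion identity $\frac{(e^x-1)^k}{k!}=\sum_{n\geq k}S(n,k)\frac{x^n}{n!}$, reading off $G_n=\sum_{k=1}^{n}(-1)^{k-1}a_k\,S(n,k)$ in a single coefficient comparison. This dispenses entirely with the first-kind Stirling numbers and the inversion lemma, at the cost of invoking the second-kind generating function (standard, e.g.\ in Comtet, though not stated in the paper); the two routes are of course dual, since Stirling inversion is precisely the coefficient-level expression of the fact that $\log(1+y)$ and $e^x-1$ are compositional inverses. What the paper's longer path buys is the identity \eqref{eqq5} itself, which has independent interest; what yours buys is brevity. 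The only points you should make explicit in a final write-up are the routine justifications of the formal substitution $y=e^x-1$ (legitimate since $e^x-1$ has zero constant term) and of the interchange of summation (legitimate since $\frac{(e^x-1)^k}{k!}$ has valuation $k$, so each coefficient of $x^n$ receives only finitely many contributions).
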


To prove this theorem, we need some intermediary results. The first one (Proposotion \ref{p1} below) can be immediately derived from the following identity of Rockett \cite{roc}:
\begin{equation}\label{eqq3}
\sum_{k = 0}^{n} \frac{1}{\binom{n}{k}} = \frac{n + 1}{2^{n + 1}} \sum_{k = 1}^{n + 1} \frac{2^k}{k} ~~~~~~~~~~ (\forall n \in \N) .
\end{equation}
But for convenience, we prefer reproduce its proof here.

\begin{prop}\label{p1}
For all natural number $n$, we have
\begin{equation}\label{eq1}
a_n = \frac{n!}{2^n} \sum_{k = 1}^{n} \frac{2^k}{k} .
\end{equation}
\end{prop}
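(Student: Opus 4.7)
The plan is to establish a first-order recurrence for $a_n$, verify that the right-hand side of \eqref{eq1} satisfies the same recurrence, and then conclude by induction.

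For the recurrence, I would manipulate
\[
a_n = \sum_{k=0}^{n-1} k!\,(n-1-k)!
\]
in two symmetric ways. First, I would isolate the boundary term $k=n-1$ (which equals $(n-1)!$) and, in the remaining sum, pull out a factor of $(n-1-k)$ from $(n-1-k)!$. Second, I would isolate the boundary term $k=0$ (which also equals $(n-1)!$) and, in the remaining sum, pull out a factor of $k$ from $k!$ and reindex by $j=k-1$. Adding the two resulting expressions combines the factors $(n-1-k)+(k+1)=n$ inside the common sum, which is exactly $n$ times $a_{n-1}$; this yields
\[
2 a_n = n\,a_{n-1} + 2(n-1)! \qquad (n\ge 1).
\]

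Setting $A_n := \frac{n!}{2^n}\sum_{k=1}^{n} \frac{2^k}{k}$, I would split off the $k=n$ term and use $\frac{n!}{2^n} = \frac{n}{2}\cdot\frac{(n-1)!}{2^{n-1}}$ to see immediately that $A_n$ satisfies $2 A_n = n\,A_{n-1} + 2(n-1)!$ as well. Together with the base case $a_0 = A_0 = 0$, a straightforward induction on $n$ finishes the proof.

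The main obstacle is the symmetric-splitting trick in the first step: one has to notice that writing $a_n$ in two mirror-image ways and summing them produces exactly the factor $n$ needed to reconstitute $a_{n-1}$ inside the sum. Everything after that is routine algebra.
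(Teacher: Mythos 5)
Your proof is correct and follows essentially the same route as the paper: both hinge on the recurrence $2 a_n = n\, a_{n-1} + 2 (n-1)!$ together with the base case $a_0 = 0$. The paper derives the recurrence by a one-sided splitting (writing $n-k-1 = n - (k+1)$ and recognizing $a_n$ itself in the resulting sum) and then concludes by telescoping $\frac{2^k}{k!} a_k - \frac{2^{k-1}}{(k-1)!} a_{k-1} = \frac{2^k}{k}$, whereas you derive it by your symmetric splitting and finish by induction on the closed form; these are only cosmetic differences.
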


\begin{proof}
We begin by establishing a recurrent formula for the sequence ${(a_n)}_n$. For any integer $n \geq 2$, we have
\begin{align*}
a_n & := \sum_{k = 0}^{n - 1} k! (n - k - 1)! \\
& = \sum_{k = 0}^{n - 2} k! (n - k - 1)! + (n - 1)! \\
& = \sum_{k = 0}^{n - 2} k! (n - k - 2)! (n - k - 1) + (n - 1)! \\
& = n \sum_{k = 0}^{n - 2} k! (n - k - 2)! - \sum_{k = 0}^{n - 2} (k + 1)! (n - k - 2)! + (n - 1)! .
\end{align*}
But since
$$
\sum_{k = 0}^{n - 2} k! (n - k - 2)! = a_{n - 1}
$$
and
\begin{align*}
\sum_{k = 0}^{n - 2} (k + 1)! (n - k - 2)! & = \sum_{\ell = 1}^{n - 1} \ell! (n - \ell - 1)! ~~~~~~~~~~ (\text{by putting } \ell = k + 1) \\
& = \sum_{\ell = 0}^{n - 1} \ell! (n - \ell - 1)! - (n - 1)! \\
& = a_n - (n - 1)! ,
\end{align*}
it follows that:
$$
a_n = n a_{n - 1} - a_n + 2 \cdot (n - 1)! .
$$
Hence
\begin{equation}\label{eqq4}
a_n = \frac{n}{2} a_{n - 1} + (n - 1)! .
\end{equation}
Further, we remark that Formula \eqref{eqq4} also holds for $n = 1$. Now, according to Formula \eqref{eqq4}, we have for any positive integer $k$:
$$
\frac{2^k}{k!} a_k - \frac{2^{k - 1}}{(k - 1)!} a_{k - 1} = \frac{2^k}{k} .
$$
Then by summing both sides of the last equality from $k = 1$ to $n$, we obtain (because the sum on the left is telescopic and $a_0 = 0$) that:
$$
\frac{2^n}{n!} a_n = \sum_{k = 1}^{n} \frac{2^k}{k} ,
$$
which gives the required formula. The proof is achieved.
\end{proof}

\begin{coll}\label{coll1}
The exponential generating function of the sequence ${(a_n)}_n$ is given by:
\begin{equation}\label{eq2}
\sum_{n = 0}^{+ \infty} a_n \frac{x^n}{n!} = \frac{- 2 \log(1 - x)}{2 - x} .
\end{equation}
\end{coll}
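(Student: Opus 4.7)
The plan is to substitute the closed form from Proposition \ref{p1} into the defining series of the exponential generating function and then interchange the order of summation. Concretely, using $a_n = \frac{n!}{2^n}\sum_{k=1}^{n}\frac{2^k}{k}$ (valid for $n \geq 1$, with $a_0 = 0$), I would write
\begin{equation*}
\sum_{n = 0}^{+\infty} a_n \frac{x^n}{n!} \;=\; \sum_{n = 1}^{+\infty} \frac{x^n}{2^n} \sum_{k = 1}^{n} \frac{2^k}{k} \;=\; \sum_{k = 1}^{+\infty} \frac{2^k}{k} \sum_{n = k}^{+\infty} \left(\frac{x}{2}\right)^{\!n}.
\end{equation*}
The inner geometric series evaluates (for $|x|<2$, and in particular for $|x|<1$) to $\frac{(x/2)^k}{1 - x/2}$, so the double sum collapses to
\begin{equation*}
\frac{1}{1 - x/2} \sum_{k = 1}^{+\infty} \frac{x^k}{k} \;=\; \frac{-\log(1 - x)}{1 - x/2} \;=\; \frac{-2 \log(1 - x)}{2 - x},
\end{equation*}
which is exactly \eqref{eq2}.

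The only subtlety is justifying the interchange of the two summations, which is immediate by absolute convergence for $|x| < 1$ (since $\frac{2^k}{k} \cdot (|x|/2)^k = |x|^k/k$ gives a summable majorant). As an alternative route, one could translate the recurrence $a_n = \frac{n}{2} a_{n-1} + (n-1)!$ established in \eqref{eqq4} into a first-order linear ODE for $f(x) := \sum_{n \geq 0} a_n \frac{x^n}{n!}$, namely $(1 - x/2) f'(x) - \tfrac{1}{2} f(x) = \frac{1}{1-x}$, and solve it with the initial condition $f(0) = 0$; but this is considerably more laborious than the direct summation swap above, so I would prefer the first approach. There is no real obstacle here: the proof is essentially a one-line rearrangement once Proposition \ref{p1} is in hand.
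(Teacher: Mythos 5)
Your proposal is correct and follows essentially the same route as the paper: substitute the closed form from Proposition \ref{p1}, swap the order of summation, and evaluate the resulting geometric series. The only difference is that you explicitly justify the interchange by absolute convergence for $|x|<1$, which the paper leaves implicit.
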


\begin{proof}
Using Formula \eqref{eq1} of Proposition \ref{p1}, we have
$$
\sum_{n = 0}^{+ \infty} a_n \frac{x^n}{n!} = \sum_{n = 1}^{+ \infty} \left(\frac{1}{2^n} \sum_{k = 1}^{n} \frac{2^k}{k}\right) x^n = \sum_{k = 1}^{+ \infty} \sum_{n = k}^{+ \infty} \frac{1}{2^n} \frac{2^k}{k} x^n = \sum_{k = 1}^{+ \infty} \frac{2^k}{k} \left(\sum_{n = k}^{+ \infty} \left(\frac{x}{2}\right)^n\right) .
$$ 
But since $\sum_{n = k}^{+ \infty} \left(\frac{x}{2}\right)^n = \left(\frac{x}{2}\right)^k \frac{1}{1 - \frac{x}{2}} = \frac{x^k}{2^k} \cdot \frac{2}{2 - x}$, we get
$$
\sum_{n = 0}^{+ \infty} a_n \frac{x^n}{n!} = \frac{2}{2 - x} \sum_{k = 1}^{+ \infty} \frac{x^k}{k} = \frac{2}{2 - x} \left(- \log(1 - x)\right) , 
$$
as required. This achieves the proof.
\end{proof}

Next, from Corollary \ref{coll1} and Formula \eqref{eq5}, we derive the following corollary:

\begin{coll}\label{coll5}
For any natural number $n$, we have
\begin{equation}\label{eqq5}
a_n = (-1)^{n - 1} \sum_{k = 0}^{n} G_k s(n , k) .
\end{equation}
\end{coll}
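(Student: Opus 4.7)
The plan is to match the two exponential generating functions. From Corollary \ref{coll1} we already know
$$
\sum_{n=0}^{\infty} a_n \frac{x^n}{n!} \;=\; \frac{-2\log(1-x)}{2-x},
$$
so the goal is to recognize the right-hand side as a transform of the Genocchi generating function \eqref{eq4}. The natural substitution is $y := \log(1-x)$: since $e^{y}=1-x$, plugging $y$ into \eqref{eq4} formally (which is legitimate because $\log(1-x)$ has vanishing constant term) gives
$$
\sum_{k=0}^{\infty} G_k \frac{(\log(1-x))^k}{k!} \;=\; \frac{2\log(1-x)}{(1-x)+1} \;=\; \frac{2\log(1-x)}{2-x}.
$$
Hence the EGF of $(a_n)$ equals $-\sum_{k\ge 0} G_k (\log(1-x))^k/k!$.

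Next, I would bring in the Stirling numbers by expanding each power $(\log(1-x))^k/k!$ in $x$. Replacing $x$ by $-x$ in \eqref{eq5} yields
$$
\frac{(\log(1-x))^k}{k!} \;=\; \sum_{n=k}^{\infty} (-1)^n\, s(n,k) \frac{x^n}{n!}.
$$
Substituting this into the previous identity and interchanging the order of summation produces
$$
\sum_{n=0}^{\infty} a_n \frac{x^n}{n!} \;=\; \sum_{n=0}^{\infty} \left(\sum_{k=0}^{n} (-1)^{n+1} G_k\, s(n,k)\right)\frac{x^n}{n!}.
$$
Equating coefficients of $x^n/n!$ then gives $a_n = (-1)^{n-1}\sum_{k=0}^{n} G_k\, s(n,k)$, which is exactly \eqref{eqq5}.

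Essentially the whole proof is a composition-of-series manipulation, so there is no real obstacle; the only point to check carefully is the validity of the substitution $y = \log(1-x)$ in the Genocchi EGF, which is fine at the level of formal power series because $\log(1-x)$ has no constant term. The sign bookkeeping ($(-1)^n$ from the expansion of $(\log(1-x))^k$ combined with the initial minus sign) is what produces the prefactor $(-1)^{n-1}$ in the final formula, and the boundary case $n=0$ reduces to the trivial identity $0 = -G_0$ since $G_0 = 0$.
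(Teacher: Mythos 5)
Your proposal is correct and follows essentially the same route as the paper: substitute $\log(1-x)$ into the Genocchi generating function to identify the EGF of $(a_n)$ as $-\sum_{k}G_k\log^k(1-x)/k!$, then expand via \eqref{eq5} with $x\mapsto -x$ and compare coefficients. The sign bookkeeping and the formal-substitution justification are handled exactly as in the paper's proof.
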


\begin{proof}
Let us consider the following three functions (which are analytic on the neighborhood of zero):
$$
f(x) := \frac{- 2 \log(1 - x)}{2 - x} ~,~ g(x) := \frac{2 x}{e^x + 1} ~,~ \text{ and } h(x) := \log(1 - x) .
$$
We easily check that $f = - g \circ h$. Since in addition $h(0) = 0$ then the power series expansion of $f$ about the origin can be obtained by substituting $h$ in the power series expansion of $g$ about the origin (which is given by \eqref{eq4}) and multiplying by $(-1)$. Doing so, we get
\begin{equation}\label{eq3}
f(x) = - \sum_{k = 0}^{+ \infty} G_k \frac{(h(x))^k}{k!} = - \sum_{k = 0}^{+ \infty} G_k \frac{\log^k(1 - x)}{k!} . 
\end{equation}
Further, by substituting in \eqref{eq5} $x$ by $(- x)$, we have for any $k \in \N$:
$$
\frac{\log^k(1 - x)}{k!} = \sum_{n = k}^{+ \infty} (-1)^n s(n , k) \frac{x^n}{n!} .
$$
So, by inserting this last into \eqref{eq3}, we get
\begin{align*}
f(x) & = - \sum_{k = 0}^{+ \infty} G_k \sum_{n = k}^{+ \infty} (-1)^n s(n , k) \frac{x^n}{n!} \\
& = - \sum_{n = 0}^{+ \infty} \sum_{k = 0}^{n} (-1)^n G_k s(n , k) \frac{x^n}{n!} \\
& = \sum_{n = 0}^{+ \infty} \left[(-1)^{n - 1} \sum_{k = 0}^{n} G_k s(n , k)\right] \frac{x^n}{n!} .
\end{align*}
Comparing this with Formula \eqref{eq2} of Corollary \ref{coll1}, we conclude that:
$$
a_n = (-1)^{n - 1} \sum_{k = 0}^{n} G_k s(n , k) ~~~~~~~~~~ (\forall n \in \N) ,
$$
as required.
\end{proof}

We finally derive our main result from Corollary \ref{coll5} above by applying the well-known inversion formula recalled in the following proposition:

\begin{propn}\label{p2}
Let ${(u_n)}_{n \in \N}$ and ${(v_n)}_{n \in \N}$ be two real sequences. Then the two following identities $(I)$ and $(II)$ are equivalent:
\begin{align}
u_n & = \sum_{k = 0}^{n} v_k s(n , k) ~~~~~~~~~~ (\forall n \in \N) , \tag{$I$} \\
v_n & = \sum_{k = 0}^{n} u_k S(n , k) ~~~~~~~~~~ (\forall n \in \N) . \tag{$II$}
\end{align}
\end{propn}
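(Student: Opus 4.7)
The plan is to derive both implications directly from the orthogonality relations \eqref{eq13}. Since those relations are symmetric in the roles of $s$ and $S$, a single substitute-and-swap argument will yield one direction, and the reverse direction will follow by the same computation after exchanging the two kinds of Stirling numbers.

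Concretely, to show that $(I)$ implies $(II)$, I will substitute the expression for $u_k$ given by $(I)$ into the right-hand side of $(II)$ and interchange the order of summation:
\[
\sum_{k = 0}^{n} u_k S(n , k) = \sum_{k = 0}^{n} \left(\sum_{j = 0}^{k} v_j s(k , j)\right) S(n , k) = \sum_{j = 0}^{n} v_j \sum_{j \leq k \leq n} S(n , k) s(k , j) .
\]
The inner sum is the left-hand side of the second identity in \eqref{eq13} (with the dummy index $i$ relabeled $k$), and so equals $\delta_{n j}$. The outer sum therefore collapses to $v_n$, yielding $(II)$. The converse implication $(II) \Rightarrow (I)$ is obtained by exactly the same substitute-and-swap procedure, this time invoking the first orthogonality relation $\sum_{j \leq k \leq n} s(n , k) S(k , j) = \delta_{n j}$ in place of the second.

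There is no serious obstacle here: once the substitution is performed, the proof reduces to an interchange of two finite sums together with one application of an identity that has already been recorded in Section \ref{sec1}. The only point requiring a brief check is the bookkeeping of the summation ranges; this is taken care of by the standard convention $s(n , k) = S(n , k) = 0$ whenever $k > n$, which legitimises restricting the swapped index $k$ to the interval $[j , n]$ without altering the value of the sum.
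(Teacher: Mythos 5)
Your proof is correct and follows exactly the route the paper indicates: its proof of Proposition \ref{p2} simply says to use the orthogonality relations \eqref{eq13} and refers to \cite{com} and \cite{rio} for details, and your substitute-and-swap computation is precisely the standard argument being alluded to. You have merely written out explicitly what the paper leaves to the references, including the correct handling of the summation ranges.
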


\begin{proof}
Use the orthogonality relations \eqref{eq13} (see e.g., \cite{com} or \cite{rio} for the details).
\end{proof}

\begin{proof}[Proof of Theorem \ref{t1}]
It suffices to apply Proposition \ref{p2} for $u_n = (-1)^{n - 1} a_n$ and $v_n = G_n$ ($\forall n \in \N$). In view of \eqref{eqq5}, Identity $(I)$ holds; so $(II)$ also, that is
$$
G_n = \sum_{k = 0}^{n} (-1)^{k - 1} a_k S(n , k) ~~~~~~~~~~ (\forall n \in \N) .
$$
Finally, by substituting in this last equality $a_k$ by its expression given by \eqref{eqq1}, we get for any $n \in \N$:
\begin{align*}
G_n & = \sum_{k = 1}^{n} (-1)^{k - 1} \left(\sum_{i = 0}^{k - 1} i! (k - i - 1)!\right) S(n , k) \\
& = \sum_{k = 1}^{n} (-1)^{k - 1} \left(\sum_{\ell = 1}^{k} (\ell - 1)! (k - \ell)!\right) S(n , k) ~~~~~~~~~~ (\text{by putting } \ell = i + 1) \\
& = \sum_{1 \leq \ell \leq k \leq n} (-1)^{k - 1} (\ell - 1)! (k - \ell)! S(n , k) ,
\end{align*}
as required.
\end{proof}

Now, we turn to present another type of results providing nontrivial lower bounds for the $2$-adic valuations of the rational numbers $\sum_{k = 1}^{n} \frac{2^k}{k}$ ($n \in \N^*$).

\begin{thm}\label{t2}
For any positive integer $n$, we have
\begin{equation}\label{eq6}
\vartheta_2\left(\sum_{k = 1}^{n} \frac{2^k}{k}\right) \geq s_2(n) 
\end{equation}
and {\rm(}more strongly{\rm)}:
\begin{equation}\label{eqq6}
\vartheta_2\left(\sum_{k = 1}^{n} \frac{2^k}{k}\right) \geq n - \left\lfloor\frac{\log{n}}{\log{2}}\right\rfloor .
\end{equation}
\end{thm}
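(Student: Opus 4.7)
The plan is to use Proposition~\ref{p1} to convert the statement about the rational sum $\sum_{k=1}^n 2^k/k$ into one about the $2$-adic valuation of the integer $a_n$, and then handle the two inequalities in turn.

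Set $T_n := \sum_{k=1}^n \frac{2^k}{k}$. By Proposition~\ref{p1}, $T_n = \frac{2^n a_n}{n!}$; combined with Legendre's formula \eqref{eq12}, which gives $\vartheta_2(n!) = n - s_2(n)$, this yields
$$
\vartheta_2(T_n) \;=\; n + \vartheta_2(a_n) - (n - s_2(n)) \;=\; s_2(n) + \vartheta_2(a_n) .
$$
Since each summand in \eqref{eqq1} is a positive integer, $a_n \in \N^*$ and $\vartheta_2(a_n) \geq 0$, so $\vartheta_2(T_n) \geq s_2(n)$, which is \eqref{eq6}.

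For the stronger bound \eqref{eqq6}, the same displayed identity reduces the problem to
$$
\vartheta_2(a_n) \;\geq\; \vartheta_2(n!) - \left\lfloor \frac{\log n}{\log 2} \right\rfloor .
$$
At this point I would invoke the author's identity from \cite{far}. The plan is to use that identity to re-express $a_n$ (or, equivalently, $T_n$) in a form amenable to $2$-adic analysis: apply Legendre's formula term by term, use Kummer's theorem to control the valuations $\vartheta_2\!\left(\binom{n-1}{k}\right)$ of the binomial coefficients that arise, and then account for the cancellations among the resulting terms that boost the bound beyond any naive term-wise estimate.

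The main obstacle is exhibiting those cancellations. A direct term-wise estimate on $a_n = \sum_{k=0}^{n-1} k!(n-1-k)!$ (together with $\vartheta_2(k!(n-1-k)!) = \vartheta_2((n-1)!) - \vartheta_2\!\left(\binom{n-1}{k}\right)$) yields only
$$
\vartheta_2(a_n) \;\geq\; \vartheta_2((n-1)!) - \max_{0 \leq k \leq n-1} \vartheta_2\!\left(\binom{n-1}{k}\right) \;\geq\; \vartheta_2((n-1)!) - \left\lfloor \frac{\log(n-1)}{\log 2} \right\rfloor ,
$$
which falls short of the required bound — by $\vartheta_2(n)$ when $n$ is not a power of $2$, and by $\lfloor \log n/\log 2 \rfloor - 1$ when $n$ is a power of $2$ (in the latter case Lucas' theorem forces $\vartheta_2\!\left(\binom{n-1}{k}\right) = 0$ for every $k$, yet $\vartheta_2(a_n)$ is still considerably larger than $\vartheta_2((n-1)!)$, as direct computation at $n = 8$ shows). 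The role of the author's identity from \cite{far} is precisely to make these extra factors of $2$ accessible.
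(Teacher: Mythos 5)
Your proof of \eqref{eq6} is correct and coincides with the paper's: Proposition \ref{p1} plus Legendre's formula converts the claim into $\vartheta_2(a_n)\geq 0$, which holds because $a_n$ is a positive integer.

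For \eqref{eqq6}, what you have written is a plan with an acknowledged hole rather than a proof: you say the main obstacle is exhibiting the cancellations, and you never do. The gap is real, and it comes from attacking the problem at the wrong level --- no term-by-term valuation analysis, no Kummer or Lucas theorem, and no cancellation bookkeeping is needed. The paper's argument is purely multiplicative. Rockett's identity \eqref{eqq3}, applied with $n-1$ in place of $n$, gives
$$
\sum_{k=1}^{n}\frac{2^k}{k} \;=\; \frac{2^n}{n}\sum_{k=0}^{n-1}\frac{1}{\binom{n-1}{k}} .
$$
The number $N:=\lcm\left\{\binom{n-1}{0},\dots,\binom{n-1}{n-1}\right\}\sum_{k=0}^{n-1}\frac{1}{\binom{n-1}{k}}$ is trivially an integer, since the lcm clears every denominator; and the identity \eqref{eqq7} from \cite{far}, taken with $m=n-1$, evaluates that lcm as $\lcm(1,2,\dots,n)/n$. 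Substituting, the factor $n$ cancels and
$$
\sum_{k=1}^{n}\frac{2^k}{k} \;=\; \frac{2^n}{\lcm(1,2,\dots,n)}\cdot N ,
$$
whence $\vartheta_2\bigl(\sum_{k=1}^{n}\frac{2^k}{k}\bigr)\geq n-\vartheta_2\bigl(\lcm(1,2,\dots,n)\bigr)=n-\left\lfloor\frac{\log n}{\log 2}\right\rfloor$. So the role of \cite{far} is not to make extra factors of $2$ visible inside a sum of factorials; it is to identify the single global denominator $\lcm(1,\dots,n)/n$ that controls the entire reciprocal-binomial sum at once. Your term-wise bound via $\max_k\vartheta_2\binom{n-1}{k}$ cannot succeed precisely because, as your own computation at $n=8$ shows, it discards this global structure.
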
  

\begin{proof}
Let $n$ be a fixed positive integer. Since $a_n \in \Z$ then we have $\vartheta_2(a_n) \geq 0$. But, by using Formula \eqref{eq1} of Proposition \ref{p1}, this is equivalent to:
$$
\vartheta_2(n!) - n + \vartheta_2\left(\sum_{k = 1}^{n} \frac{2^k}{k}\right) \geq 0 .
$$
Then, using the Legendre formula \eqref{eq12} for the prime number $p = 2$, which says that $\vartheta_2(n!) = \frac{n - s_2(n)}{2 - 1} = n - s_2(n)$, we get
$$
\vartheta_2\left(\sum_{k = 1}^{n} \frac{2^k}{k}\right) \geq s_2(n) ,
$$
confirming \eqref{eq6}. To establish the stronger lower bound \eqref{eqq6}, we use the Rockett formula \eqref{eqq3} together with the identity:
\begin{equation}\label{eqq7}
\lcm\left\{\binom{m}{0} , \binom{m}{1} , \dots , \binom{m}{m}\right\} = \frac{\lcm(1 , 2 , \dots , m , m + 1)}{m + 1} ~~~~~~~~~~ (\forall m \in \N) ,
\end{equation}
established by the author in \cite{far}. According to \eqref{eqq3} and \eqref{eqq7}, we have that
$$
\sum_{k = 1}^{n} \frac{2^k}{k} = \frac{2^n}{n} \sum_{k = 0}^{n - 1} \frac{1}{\binom{n - 1}{k}} ~~\text{and}~~ 1 = \frac{n}{\lcm(1 , 2 , \dots , n)} \cdot \lcm\left\{\binom{n - 1}{0} , \binom{n - 1}{1} , \dots , \binom{n - 1}{n - 1}\right\} .
$$
By multiplying side by side these last equalities, we get
$$
\sum_{k = 1}^{n} \frac{2^k}{k} = \frac{2^n}{\lcm(1 , 2 , \dots , n)} \cdot \lcm\left\{\binom{n - 1}{0} , \binom{n - 1}{1} , \dots , \binom{n - 1}{n - 1}\right\} \sum_{k = 0}^{n - 1} \frac{1}{\binom{n - 1}{k}} .
$$
But since the rational number $\lcm\{\binom{n - 1}{0} , \binom{n - 1}{1} , \dots , \binom{n - 1}{n - 1}\} \sum_{k = 0}^{n - 1} \frac{1}{\binom{n - 1}{k}}$ is obviously an integer, then we derive that:
$$
\vartheta_2\left(\sum_{k = 1}^{n} \frac{2^k}{k}\right) \geq \vartheta_2\left(\frac{2^n}{\lcm(1 , 2 , \dots , n)}\right) = n - \left\lfloor\frac{\log n}{\log 2}\right\rfloor ,
$$
confirming \eqref{eqq6} and completes the proof.
\end{proof}

\section{Two open problems}

\begin{enumerate}
\item Find a generalization of Theorem \ref{t2} to other prime numbers $p$ other than $p = 2$. Notice that the generalization that might immediately come to mind:
$$
\vartheta_p\left(\sum_{k = 1}^{n} \frac{p^k}{k}\right) \geq s_p(n) 
$$
is false for $p > 2$ (take for example $n = 2$).
\item Find a combinatorial interpretation for Formula \eqref{eqq2} of Theorem \ref{t1}. 
\end{enumerate}

\rhead{\textcolor{OrangeRed3}{\it References}}

\end{document}